\newcommand\myfootnote[1]{
\renewcommand{\thefootnote}{}
\footnotetext{#1}
\def\thefootnote{\@arabic\c@footnote}
}
\renewcommand{\subsection}{\@startsection{subsection}{2}{0mm}{-\baselineskip}{-5pt}{\it \bf}}
\newtheorem{theorem}{Theorem}
\newtheorem{lemma}{Lemma}
\newtheorem{corollary}{Corollary}
\title{ON THE LIE STRUCTURE OF LOCALLY MATRIX ALGEBRAS}
\author{OKSANA BEZUSHCHAK}
\begin{document}

\maketitle

\address{Faculty of Mechanics and Mathematics,
Taras Shevchenko National University of Kyiv\\ Volodymyrska 60, Kyiv 01033, Ukraine\\
\email{bezusch@univ.kiev.ua}}

\begin{abstract}
Let $A$ be a unital  locally matrix algebra over a field $\mathbb{F}$ of characteristic different from $2.$ We find necessary and sufficient condition for the Lie algebra $A\diagup\mathbb{F}\cdot 1$ to be simple and for the Lie algebra of derivations $\text{Der}(A)$ to be topologically simple. The condition depends on the Steinitz number of $A$ only.

{\it  Mathematics Subject Classification 2020: 16W10, 16W25}

{\it Keywords: Locally matrix algebra; derivation.}
	
	\end{abstract}

\section*{Introduction}

Let $F$ be a ground  field of characteristic different from $2$ and let $ \mathbb{N} $ be the set of all positive integers. Recall that an associative $F$--algebra  $A$ is called  a \emph{locally matrix algebra} (see \cite{Koethe,Kurosh}) if for an arbitrary finite subset of $A$ there exists a subalgebra   $B\subset A$ containing  this subset and such that  $B\cong M_n(F)$ for some $n\in \mathbb{N}.$ We call a locally matrix algebra \emph{unital} if it contains unit $1.$

Let $ \mathbb{P} $ be the set of all primes. An infinite formal product of the form
\begin{equation}\label{1}
s \ = \ \prod_{p\in \mathbb{P}} p^{r_p} \ , \quad \text{where} \quad    r_p\in  \mathbb{N} \cup \{0,\infty\} \quad \text{for all} \quad p\in \mathbb{P},\end{equation}
is called {\it Steinitz number}; see \cite{ST}. Denote by symbol $ \mathbb{SN} $ the set of all Steinitz numbers. Let
$$ s_1\ = \ \prod_{p\in \mathbb{P}} p^{r_p} \, , \  \ s_2 \ = \ \prod_{p\in \mathbb{P}} p^{k_p} \ \in \mathbb{SN}.   $$
Then
$$ s_1 \cdot s_2 \ = \ \prod_{p\in \mathbb{P}} p^{r_p+k_p}  \ ,  \quad \text{where} \quad k_p \in  \mathbb{N} \cup \{0,\infty\}, $$ and   $ t+\infty=\infty+t=\infty+\infty=\infty $ for all $ t \in \mathbb{N}.$

Let $A$ be a countable--dimensional unital locally matrix algebra. In \cite{Glimm}, J.G.Glimm defined the Steinitz number $\mathbf{st}(A)$ of the algebra $A$ and proved that the algebra $A$ is uniquely determined by $\mathbf{st}(A)$.

In \cite{BezOl}, we extended Glimm's definition to  unital locally matrix algebras of arbitrary dimensions. For a unital locally matrix algebra $A$ denote by  $D(A)$  the  set of all numbers $n\in \mathbb{N}$ such that  there exists  a subalgebra  $A'\subseteq A$, $ 1 \in A'$, and $A' \cong  M_n(F).$ The {\it Steinitz number} $\mathbf{st}(A)$ \emph{of the algebra} $A$ is the  least common multiple of the set $D(A).$ It turned out that a unital locally matrix algebra $A$ of dimension  $> \aleph_0 $ is no longer determined by its Steinitz number $\mathbf{st}(A)$; see \cite{BezOl,BezOl_2}.

An associative algebra $A$ gives rise to the Lie algebra $$A^{(-)} \ = \ (A, \ [a,b]=ab-ba).$$ Along with the Lie algebra $A^{(-)}$ we will consider its square $[A,A].$ Let $Z(A)$ denote the center of the associative algebra $A.$ In \cite{SkolemNoether_1}, I.~N.~Herstein showed that if $A$ is a simple associative algebra then the Lie algebra
\begin{equation}\label{equation1}
[A,A] \, \diagup \, \raisebox{-2pt}{$Z(A)\cap [A,A]$}
\end{equation}
is simple. Since a locally matrix algebra $A$ is simple, it follows that the Lie algebra  (\ref{equation1}) is simple.

Let  $M_n(\mathbb{F})$ be a matrix algebra, $$\mathfrak{gl}(n) \ = \ \big(M_n(\mathbb{F})\big)^{(-)}, \quad  Z(M_n(\mathbb{F})) \ = \ \mathbb{F}\cdot 1.$$ The Lie algebra  $$\mathfrak{pgl}(n)\ =\ \mathfrak{gl}(n)\, \diagup \, \raisebox{-2pt}{$\mathbb{F}\cdot 1$}$$ is simple unless $p=\text{char}\ \mathbb{F}>0$ and  $p$ divides  $n;$ see  \cite{Seligman}. We will show that for an infinite--dimensional unital locally matrix algebra $A$ simplicity of the Lie algebra
\begin{equation}\label{equation2}
A^{(-)}\, \diagup \, \raisebox{-2pt}{$\mathbb{F}\cdot 1$}\end{equation}  depends only on the Steinitz number $\mathbf{st}(A).$

For a Steinitz number (\ref{1}) denote $\nu_p(s)=k_p.$

\begin{theorem}\label{theorem1} The Lie algebra $(\ref{equation2})$ is simple if and only if \begin{equation}\label{Us_1}
                                 \emph{char} \ \mathbb{F}   = 0  \end{equation}  or
\begin{equation*}  \emph{char} \ \mathbb{F}  =  p  > 0  \quad \text{and} \quad  \nu_p  (  \mathbf{st}(A)  ) =  0 \quad  \text{or} \quad  \infty.\end{equation*} \end{theorem}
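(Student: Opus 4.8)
The plan is to reduce the statement to the elementary question of when $[A,A]+\mathbb{F}\cdot 1=A$. Since $[A,A]=[A^{(-)},A^{(-)}]$ is an ideal of the Lie algebra $A^{(-)}$ and $[\mathbb{F}\cdot 1,A]=0$, the subspace $[A,A]+\mathbb{F}\cdot 1$ is an ideal of $A^{(-)}$ containing $\mathbb{F}\cdot 1$, so its image $I$ in the Lie algebra $(\ref{equation2})$ is an ideal; and as $A$ is infinite--dimensional it has a unital subalgebra isomorphic to $M_n(\mathbb{F})$ with $n\geq 2$, whence $[A,A]\supseteq\mathfrak{sl}(n)\not\subseteq\mathbb{F}\cdot 1$ and $I\neq 0$. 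Hence, if $[A,A]+\mathbb{F}\cdot 1\neq A$, then $I$ is a proper nonzero ideal and $(\ref{equation2})$ is not simple. Conversely, if $[A,A]+\mathbb{F}\cdot 1=A$, then using $Z(A)=\mathbb{F}\cdot 1$ (standard for locally matrix algebras) the second isomorphism theorem yields an isomorphism between $(\ref{equation2})$ and the Lie algebra $(\ref{equation1})$, which is simple by Herstein's theorem quoted above; hence $(\ref{equation2})$ is simple. Thus the theorem is equivalent to the assertion that $[A,A]+\mathbb{F}\cdot 1=A$ holds exactly when $(\ref{Us_1})$ holds, or $\mathrm{char}\,\mathbb{F}=p>0$ and $\nu_p(\mathbf{st}(A))\in\{0,\infty\}$.

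To compute $[A,A]+\mathbb{F}\cdot 1$ I use that $A$ is the directed union of its unital subalgebras $B\cong M_{n_B}(\mathbb{F})$, so $[A,A]=\bigcup_B[B,B]$ equals the union of the trace--zero subspaces $\mathfrak{sl}(n_B)\subseteq B$, and therefore $[A,A]+\mathbb{F}\cdot 1=\bigcup_B\bigl(\mathfrak{sl}(n_B)+\mathbb{F}\cdot 1\bigr)$. Within a fixed $B$ one has $\mathfrak{sl}(n_B)+\mathbb{F}\cdot 1=B$ when $\mathrm{char}\,\mathbb{F}\nmid n_B$ (split off the scalar part of the normalized trace), while $\mathfrak{sl}(n_B)+\mathbb{F}\cdot 1=\mathfrak{sl}(n_B)=\ker(\mathrm{tr}_B)$ is a hyperplane of $B$ when $\mathrm{char}\,\mathbb{F}=p$ with $p\mid n_B$. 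I also need two elementary facts, which I would isolate as a lemma citing \cite{BezOl}: for $m\in\mathbb{N}$ one has $m\in D(A)$ if and only if $m\mid\mathbf{st}(A)$; and if $B\cong M_n(\mathbb{F})$ is a unital subalgebra and $n\mid m\mid\mathbf{st}(A)$, then $B$ is contained in a unital subalgebra $B'\cong M_m(\mathbb{F})$, the corresponding inclusion $M_n\hookrightarrow M_m$ multiplying traces by $m/n$ (the Noether--Skolem description of unital embeddings of matrix algebras, applied to centralizers).

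Now the case analysis. If $(\ref{Us_1})$ holds, or $\mathrm{char}\,\mathbb{F}=p$ with $\nu_p(\mathbf{st}(A))=0$, then every $n_B\mid\mathbf{st}(A)$ is prime to $\mathrm{char}\,\mathbb{F}$, so $\mathfrak{sl}(n_B)+\mathbb{F}\cdot 1=B$ for every $B$ and $[A,A]+\mathbb{F}\cdot 1=\bigcup_B B=A$. If $\mathrm{char}\,\mathbb{F}=p$ and $\nu_p(\mathbf{st}(A))=\infty$, then given $a\in A$ I take a unital $B_1\cong M_{n_1}(\mathbb{F})$ with $a\in B_1$, enlarge $B_1$ to a unital $B'\cong M_{n_1p}(\mathbb{F})$ (possible since $n_1p\mid\mathbf{st}(A)$), and observe that $\mathrm{tr}_{B'}(a)=p\cdot\mathrm{tr}_{B_1}(a)=0$, so $a\in\mathfrak{sl}(n_1p)=[B',B']\subseteq[A,A]$; thus even $[A,A]=A$. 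Finally, if $\mathrm{char}\,\mathbb{F}=p$ and $0<\nu_p(\mathbf{st}(A))=k<\infty$, I choose $n_0\in D(A)$ with $\nu_p(n_0)=k$ (the supremum $\sup_{n\in D(A)}\nu_p(n)=k$ is attained), a unital $B_0\cong M_{n_0}(\mathbb{F})$, and a rank--one idempotent $e\in B_0$. The subalgebras $B\supseteq B_0$ are cofinal, and each such $B\cong M_m(\mathbb{F})$ has $n_0\mid m$ and $\nu_p(m)\leq k=\nu_p(n_0)$, hence $p\nmid m/n_0$; in particular $p\mid m$, so $\mathfrak{sl}(m)+\mathbb{F}\cdot 1=\mathfrak{sl}(m)$, whereas $\mathrm{tr}_B(e)=m/n_0\not\equiv 0\pmod p$, so $e\notin\mathfrak{sl}(m)$. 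Hence $e\notin\bigcup_{B\supseteq B_0}\bigl(\mathfrak{sl}(m)+\mathbb{F}\cdot 1\bigr)=[A,A]+\mathbb{F}\cdot 1$, so $[A,A]+\mathbb{F}\cdot 1\neq A$ and $(\ref{equation2})$ is not simple.

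The step I expect to be the crux is the last case, $0<\nu_p(\mathbf{st}(A))<\infty$: one must exhibit an element outside $[A,A]+\mathbb{F}\cdot 1$, and the key observation is that here there is a largest power $p^k$ of $p$ dividing any element of $D(A)$, so that after fixing a unital $B_0$ realizing this maximum one has $p\nmid m/n_0$ for every larger unital matrix subalgebra $B$; consequently the trace in $B$ of a rank--one idempotent of $B_0$ stays invertible modulo $p$ in every enlargement and never enters $[B,B]$. The remaining technical input, which I would quote from \cite{BezOl}, is the enlargement property of unital matrix subalgebras together with the identity $D(A)=\{m\in\mathbb{N}:m\mid\mathbf{st}(A)\}$.
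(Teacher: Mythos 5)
Your proposal is correct and follows essentially the same route as the paper: both reduce simplicity of $A^{(-)}/\mathbb{F}\cdot 1$ to the question of when $A=[A,A]+\mathbb{F}\cdot 1$ (using Herstein's theorem for the sufficiency direction), and both settle that question by a case analysis on $\nu_p(\mathbf{st}(A))$ via the centralizer decomposition $A_2=A_1\otimes_{\mathbb{F}}C$ and the multiplicativity of traces. Your cosmetic variations --- phrasing $[A,A]$ as a directed union $\bigcup_B[B,B]$, choosing $n_0$ with $\nu_p(n_0)=k$ and a rank-one idempotent instead of the paper's element of nonzero trace in a unital copy of $M_{p^k}(\mathbb{F})$, and isolating the facts $D(A)=\{m:m\mid\mathbf{st}(A)\}$ and the unital-enlargement property as a quoted lemma --- do not change the substance of the argument.
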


Recall that a linear transformation $d:A\rightarrow A$ of an algebra $A$ is called a \emph{derivation} if  $$d(ab) \ = \ d(a)\cdot b \ + \  a \cdot d(b)$$ for arbitrary elements $a,\, b\in A.$ The vector space $\text{Der}(A) $ of all derivations of an algebra $A$ is a Lie algebra with respect to commutation; see \cite{Jacobson_2}. If $A$ is an associative algebra then for an arbitrary element $a\in A$ the operator  $$\text{ad}(a): \ A \ \rightarrow \ A,  \quad x \ \mapsto \ [a,x],$$ is an {\it inner derivation}. The subspace $\text{Inder}(A)  =  \{ \, \text{ad}(a) \, | \, a\in A \ \}$ of all  inner derivations is an ideal of the Lie algebra  $\text{Der} (A).$

Let $X$ be an arbitrary set. The set $\text{Map} (X,X)$ of all mappings $X \rightarrow X$ is equipped with Tykhonoff topology; see \cite{Willard}. The subspace of all derivations $\text{Der}(A)$ of an algebra $A$ is closed in  $\text{Map} (A,A)$ in Tykhonoff topology. It makes the Lie algebra $\text{Der}(A)$ a topological algebra.

\begin{theorem}\label{theorem3} Let $A$ be a unital locally matrix algebra. Then
\begin{enumerate}
\item[$(1)$] the Lie algebra $[\emph{\text{Der}}(A),\emph{\text{Der}}(A)]$ is topologically simple;
\item[$(2)$] the Lie algebra $\emph{\text{Der}}(A)$ is topologically simple if and  only if $\emph{char} \ \mathbb{F}   = 0 $ or $\emph{char} \ \mathbb{F}  =  p  > 0 $ and $ \nu_p  (  \mathbf{st}(A)  ) =  0 $ or $ \infty.$ \end{enumerate} \end{theorem}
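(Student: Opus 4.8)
\emph{Plan.} The strategy is to study closed ideals of $\text{Der}(A)$ via the dense ideal of inner derivations. First I would assemble the following tools. (i) $\text{Der}(A)=\overline{\text{Inder}(A)}$ in the Tykhonoff topology: given $d\in\text{Der}(A)$ and a finite $X\subseteq A$, pick a unital $B\cong M_n(\mathbb{F})$ with $X\subseteq B$; since $B$ is separable, $H^1(B,A)=0$, so $d|_B=\text{ad}(a)|_B$ for some $a\in A$, and $\text{ad}(a)$ then lies in the basic neighbourhood of $d$ prescribed by $X$. (ii) The bracket on $\text{Map}(A,A)$ is jointly continuous (eventual pointwise agreement of nets is stable under composition), so closures of subalgebras and of ideals of $\text{Der}(A)$ are again subalgebras, resp. ideals. (iii) $Z(\text{Der}(A))=0$: if $[d,\text{ad}(a)]=0$ for all $a$ then $\text{ad}(d(a))=0$, so $d(A)\subseteq Z(A)=\mathbb{F}\cdot1$, and applying $d$ to squares together with $\text{char}\,\mathbb{F}\neq2$ forces $d=0$; the same computation shows any derivation carrying $[A,A]$ into $Z(A)$ is zero (using that $[A,A]$ contains $\mathfrak{sl}_n$ for every matrix subalgebra $M_n\subseteq A$, with $n\geq 3$). (iv) $[A,A]$ is stable under all derivations, so $L_0:=\text{ad}([A,A])$ is an ideal of $\text{Der}(A)$, and $L_0\cong[A,A]/(Z(A)\cap[A,A])$ via $\text{ad}$. (v) A trace computation along an exhausting chain of matrix subalgebras shows $[A,A]+\mathbb{F}\cdot1=A$ exactly when $\text{char}\,\mathbb{F}=0$, or $\text{char}\,\mathbb{F}=p>0$ with $\nu_p(\mathbf{st}(A))\in\{0,\infty\}$ (with $[A,A]=A$ when $\nu_p(\mathbf{st}(A))=\infty$).

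For part $(1)$: joint continuity gives $\overline{[\text{Der}(A),\text{Der}(A)]}=\overline{L_0}$, by approximating $d_1,d_2$ by inner derivations $\text{ad}(a_i),\text{ad}(b_j)$ and using $[\text{ad}(a_i),\text{ad}(b_j)]=\text{ad}([a_i,b_j])$. Let $0\neq J$ be a closed ideal of $\overline{L_0}$. If $[J,L_0]=0$, then $J$ centralises $\overline{L_0}\subseteq\text{Der}(A)$, so each $d\in J$ satisfies $d([A,A])\subseteq Z(A)$ and hence $d=0$ by (iii) — a contradiction; thus $0\neq J\cap L_0\trianglelefteq L_0$. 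Since $L_0\cong[A,A]/(Z(A)\cap[A,A])$ is simple by Herstein's theorem (recalled in the introduction), $L_0\subseteq J$, and closedness gives $J=\overline{L_0}$. As $[L_0,L_0]=\text{ad}([[A,A],[A,A]])\neq0$, the algebra $\overline{[\text{Der}(A),\text{Der}(A)]}$ has no proper nonzero closed ideals and is non-abelian; since $[\text{Der}(A),\text{Der}(A)]$ is a dense subalgebra of it, its closed ideals correspond to those of $\overline{L_0}$, which settles $(1)$.

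For the sufficiency in part $(2)$: assume the stated condition on $\text{char}\,\mathbb{F}$ and $\nu_p(\mathbf{st}(A))$. By (v), $[A,A]+\mathbb{F}\cdot1=A$, so $L_0=\text{ad}([A,A])=\text{ad}(A)=\text{Inder}(A)$ and hence $\overline{L_0}=\overline{\text{Inder}(A)}=\text{Der}(A)$ by (i). By Theorem~\ref{theorem1}, the Lie algebra $(\ref{equation2})\cong\text{Inder}(A)$ is simple. Now run the argument of the previous paragraph with $\text{Inder}(A)$ in place of $L_0$: a nonzero closed ideal $K$ of $\text{Der}(A)$ has $[K,\text{Inder}(A)]\neq0$ by (iii), hence $0\neq K\cap\text{Inder}(A)\trianglelefteq\text{Inder}(A)$, hence $\text{Inder}(A)\subseteq K$, hence $K=\overline{\text{Inder}(A)}=\text{Der}(A)$; so $\text{Der}(A)$ is topologically simple.

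The necessity in part $(2)$ is the main obstacle, and the crux is a properness statement for a closure. Assume $\text{char}\,\mathbb{F}=p>0$ and $0<\nu_p(\mathbf{st}(A))=k<\infty$, so by Theorem~\ref{theorem1} the Lie algebra $(\ref{equation2})$ is not simple; I claim $\overline{L_0}$ is then a proper nonzero closed ideal of $\text{Der}(A)$. It is nonzero and closed by the tools above; the point is properness. Fix a unital copy $M_{p^k}\subseteq A$ and set $e=e_{11}\in M_{p^k}$. Unwinding the Tykhonoff topology, to get $\text{ad}(e)\notin\overline{L_0}$ it suffices to exhibit one finite-dimensional matrix subalgebra $M_n$ with $M_{p^k}\subseteq M_n\subseteq A$ and $(e+C_A(M_n))\cap[A,A]=\emptyset$ (take the defining finite set of a neighbourhood of $\text{ad}(e)$ to be a basis of $M_n$); such $M_n$ in fact form a cofinal family, and since $p^k\mid n\mid\mathbf{st}(A)$ one has $\nu_p(n)=k$. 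By the double centralizer theorem $A=M_n\otimes_{\mathbb{F}}C$ with $C=C_A(M_n)$ and $\mathbf{st}(C)=\mathbf{st}(A)/n$, so $\nu_p(\mathbf{st}(C))=0$, whence $1_C\notin[C,C]$ by (v) applied to $C$. Here $e$ is an idempotent of $M_n$ of trace $n/p^k\neq0$ in $\mathbb{F}$, so $M_n=\mathfrak{sl}_n\oplus\mathbb{F}e$, while $1_{M_n}\in\mathfrak{sl}_n$ since $\text{tr}(1_{M_n})=n=0$. Using $[A,A]=(\mathfrak{sl}_n\otimes C)\oplus(\mathbb{F}e\otimes[C,C])$, the $\mathbb{F}e\otimes C$–component of any element $e+(1_{M_n}\otimes\gamma)$ of $e+C_A(M_n)$ equals $e\otimes1_C$, which lies in $\mathbb{F}e\otimes[C,C]$ only if $1_C\in[C,C]$ — impossible. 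Hence $(e+C_A(M_n))\cap[A,A]=\emptyset$, so $\text{ad}(e)\notin\overline{L_0}$, and $\text{Der}(A)$ is not topologically simple. The routine points — that a matrix subalgebra containing $M_{p^k}$ can always be enlarged inside $A$, the exact trace of $e$ in $M_n$, the multiplicativity $\mathbf{st}(A)=n\cdot\mathbf{st}(C_A(M_n))$, and the tensor description of $[A,A]$ — I would verify from the structure theory of locally matrix algebras and a direct matrix computation; the genuine difficulty is exactly the interaction between the algebraic codimension-one containment $\text{ad}([A,A])\subsetneq\text{Inder}(A)$ and the topological closure, which the centralizer computation above resolves.
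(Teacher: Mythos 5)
Your argument is correct, and parts $(1)$ and the sufficiency half of $(2)$ follow essentially the paper's route: a nonzero closed ideal is forced to meet $[\text{Inder}(A),\text{Inder}(A)]$ (resp.\ $\text{Inder}(A)$) nontrivially via the lemma that a derivation mapping $[A,A]$ into the center is zero, and then Herstein's theorem (resp.\ Theorem~\ref{theorem1}) plus density of inner derivations finishes the argument. Where you genuinely diverge is the necessity half of $(2)$. The paper stays finite-dimensional: if $\text{ad}(a)$, with $a\in A_1\cong M_{p^k}(\mathbb{F})$ and $\text{tr}_{A_1}(a)\neq0$, agreed on $A_1$ with $\text{ad}(b)$ for $b$ a sum of commutators, it encloses $A_1$, $a$, $b$ in one matrix subalgebra $A_2=A_1\otimes_{\mathbb{F}}C$ and contradicts $\text{tr}_{A_2}(a)=\text{tr}_{A_1}(a)\cdot\text{tr}_C(1)\neq0$ against $\text{tr}_{A_2}(a)=0$. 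You instead argue globally: $A=M_n\otimes_{\mathbb{F}}C$ with $C=C_A(M_n)$, $\mathbf{st}(C)=\mathbf{st}(A)/n$ prime to $p$, hence $1_C\notin[C,C]$, and the description $[A,A]=(\mathfrak{sl}_n\otimes C)\oplus(\mathbb{F}e\otimes[C,C])$ gives $(e+C_A(M_n))\cap[A,A]=\emptyset$, so $\text{ad}(e)$ avoids the closure. Both proofs hinge on the identical trace obstruction; yours buys a cleaner conceptual picture (the commutator space of $M_n(C)$ is cut out by the $C$-valued trace) at the price of structure facts you defer — that $C_A(M_n)$ is again a unital locally matrix algebra with Steinitz number $\mathbf{st}(A)/n$, and the trace criterion for $[M_n(C),M_n(C)]$ — all true and routine, whereas the paper needs only the classical finite-dimensional centralizer decomposition. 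Two small points: your item (v) as stated does not literally yield $1_C\notin[C,C]$; you need the underlying trace argument (enclose the commutators in a unital $M_q\subseteq C$ with $q$ prime to $p$ and take traces), which is exactly the computation in the paper's proof of Theorem~\ref{theorem1}; and you could simply take $M_n=M_{p^k}$, so the cofinality remark is superfluous.
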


 \section{Proof of the Theorem \ref{theorem1}}

\begin{proof} Let $A$ be a unital locally matrix $\mathbb{F}$--algebra. We will show that $$A\ = \ [A,A] + \mathbb{F}\cdot 1 \quad \text{if and only if} \quad \text{char} \ \mathbb{F}   = 0$$ $$ \text{or} \quad \text{char} \ \mathbb{F}   = p>0 \quad \text{and} \quad \nu_p(\mathbf{st}(A))=0 \text{ or } \infty.$$

Consider a matrix algebra $M_n(\mathbb{F}).$ Then $$\big[M_n(\mathbb{F}),M_n(\mathbb{F})\big] \ = \ \big\{\, a \in M_n(\mathbb{F}) \, \big| \, \text{tr}(a)=0 \,\big\} \ = \ \mathfrak{sl}(n).$$ If
$\text{char} \ \mathbb{F}   = 0$  or  $\text{char} \ \mathbb{F}   = p>0$  and $p$ does not divide $n$ then $\text{tr}(1)=n \neq 0,$ and therefore $$ a =\Big( a-\frac{1}{n} \ \text{tr}(a)\cdot 1 \Big) \ + \ \frac{1}{n} \ \text{tr}(a)\cdot 1  \ \in \ s\mathfrak{l}(n)+ \mathbb{F} \cdot 1 $$ for an arbitrary element $a\in A.$ It implies that $$M_n(\mathbb{F}) \ = \ \big[M_n(\mathbb{F}),M_n(\mathbb{F})\big] \ + \ \mathbb{F} \cdot 1.$$ If $p$ divides $n$ then $\text{tr}(1)=0,$ hence $$\big[M_n(\mathbb{F}),M_n(\mathbb{F})\big] \ + \ \mathbb{F} \cdot 1  \ = \ \mathfrak{sl}(n) \ \neq \ M_n(\mathbb{F}).$$ If $\text{char} \ \mathbb{F}   = 0 $ or $\text{char} \ \mathbb{F}  =  p  > 0 $ and $p$ does not divide $\mathbf{st}(A)$ then for an arbitrary matrix subalgebra $1\in A_1 \subset A,$ $A_1\cong M_n(\mathbb{F})$ the characteristic $p$ does not divide $n.$ Hence $$A_1\ = \ [A_1,A_1] \ + \ \mathbb{F}\cdot 1.$$ So, $A=[A,A] + \mathbb{F}\cdot 1.$

Suppose now that $p^{\infty}$ divides $\mathbf{st}(A).$ Consider a matrix subalgebra $$1\in A_1 \subset A, \quad A_1\cong M_n(\mathbb{F}).$$ The number $n$ divides $\mathbf{st}(A).$ Since $p^{\infty}$ divides $\mathbf{st}(A)$ it follows that $pn$ also divides $\mathbf{st}(A).$ Hence, there exists a subalgebra $A_2\subset A$ such that $$ A_1 \subset A_2, \quad A_2\cong M_m(\mathbb{F})\quad \text{and} \quad p \quad \text{divides} \quad m/n.$$ Let $C$ be the centralizer of the subalgebra $A_1$ in $A_2.$ We have $$A_2 \ = \ A_1 \ \otimes_{\mathbb{F}} \ C, \quad  C  \cong  M_{m/n}(\mathbb{F}) $$ (see \cite{Drozd_Kirichenko,Jacobson_3}). For arbitrary elements $a\in A_1,$ $b\in C$ we have $$ \text{tr}_{A_2}  (a\, b) \ = \ \text{tr}_{A_1} (a) \ \cdot \ \text{tr}_{C} (b) ,$$ where $\text{tr}_{A_2},$ $\text{tr}_{A_1},$ $\text{tr}_{C}$ are  traces in the subalgebras $A_2,$ $A_1,$ $C,$ respectively. We have $$ \text{tr}_{C}(1) \  = \ \frac{m}{n} \ = \ 0.$$ Hence  $$ \text{tr}_{A_2}  (A_1 \otimes 1) \ = \ \text{tr}_{A_1}  (A_1) \ \cdot \ \text{tr}_{C}  (1) \ = \ \{0\},$$ and therefore  $A_1 \subseteq [A_2, A_2].$ We showed that if $p^{\infty}$ divides $\mathbf{st}(A)$ then $A=[A,A].$

Suppose now that $\nu_p(\mathbf{st}(A))=k,$ $1\leq k < \infty.$ Consider a subalgebra  $$ 1\in A_1 \subset A, \quad A_1\cong M_{p^k}(\mathbb{F}).$$ Choose an element $a\in A_1$ such that $\text{tr}_{A_1} (a) \neq 0.$ We claim that
\begin{equation}\label{equation4}
a\not\in [A,A]+ \mathbb{F}\cdot 1.
\end{equation}
Indeed, if the element $a$ lies in the right hand side then there exists a subalgebra $A_2 \subset A$ such that $$A_1\subset A_2, \quad A_2\cong M_n(\mathbb{F})\quad \text{and} \quad a\in [A_2, A_2] + \mathbb{F}\cdot 1.$$ Since $p$ divides $n$ it follows that $\text{tr}_{A_2}  (1) = 0,$ hence $\text{tr}_{A_2}  (a) = 0.$

As above, let $C$ be the centralizer of the subalgebra $A_1$ in $A_2,$ so that $A_2 = A_1  \otimes_{\mathbb{F}}  C. $ The algebra $C$ is isomorphic to the matrix algebra $M_m(\mathbb{F}),$ where $m = n/ p^k.$ The number  $m$ is coprime with $p,$ hence $\text{tr}_{C}  (1) = m \neq 0.$ Now, $$ \text{tr}_{A_2}(a) \ = \ \text{tr}_{A_2}  (a \otimes 1) \ = \ \text{tr}_{A_1}  (a) \ \cdot \ \text{tr}_{C}  (1) \ \neq \ 0.$$ This contradiction completes the proof of the claim (\ref{equation4}).

If $A=[A,A]+ \mathbb{F} \cdot 1$ then $$A^{(-)}\, \diagup \, \raisebox{-2pt}{$\mathbb{F}\cdot 1$} \ \cong \ [A,A]\, \diagup \, \raisebox{-2pt}{$[A,A] \cap \mathbb{F}\cdot 1$}.$$ In this case, the Lie algebra $A^{(-)}\, \diagup \, \raisebox{-2pt}{$\mathbb{F}\cdot 1$}$ is simple by I.~N.~Herstein's Theorem \cite{SkolemNoether_1}. If  $A=[A,A]+ \mathbb{F} \cdot 1$ is a proper subspace of $A$ then
$$[A,A]+\mathbb{F} \cdot 1 \, \diagup \, \raisebox{-2pt}{$\mathbb{F}\cdot 1$}$$ is a proper  ideal in the Lie algebra $A^{(-)}\, \diagup \, \raisebox{-2pt}{$\mathbb{F}\cdot 1$}.$ This completes the proof of Theorem \ref{theorem1}. \end{proof}

Consider the homomorphism $$ \varphi:A^{(-)} \rightarrow \text{Inder}(A), \quad \varphi(a) = \text{ad}(a), \quad a\in A.$$ Since $\text{Ker}\, \varphi=Z(A) = \mathbb{F}\cdot 1$ it follows that $$ \text{Inder}(A) \ \cong \ A^{(-)} \, \diagup \, \raisebox{-2pt}{$\mathbb{F}\cdot 1$}.$$
\begin{corollary} \begin{enumerate}
                    \item[$(1)$] The Lie algebra $[\emph{\text{Inder}}(A),\emph{\text{Inder}}(A)]$ is simple.
                    \item[$(2)$] The Lie algebra $\emph{\text{Inder}}(A)$ is simple if and only if $\emph{char} \ \mathbb{F}   = 0$   or $ \emph{char} \ \mathbb{F}  =  p  > 0  $ and $ \nu_p  (  \mathbf{st}(A)  ) =  0  $ or $ \infty.$
                  \end{enumerate}
\end{corollary}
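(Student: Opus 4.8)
The Corollary follows almost immediately from Theorem \ref{theorem1} together with the isomorphism $\text{Inder}(A) \cong A^{(-)}/\mathbb{F}\cdot 1$ displayed just above it, so the plan is essentially to unwind the identification and transport simplicity across it.

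For part (2): the displayed homomorphism $\varphi : A^{(-)} \to \text{Inder}(A)$, $a \mapsto \text{ad}(a)$, is surjective by the definition of $\text{Inder}(A)$, and its kernel is $\{a \in A : [a,x]=0 \text{ for all } x\} = Z(A)$. Since a locally matrix algebra is simple (indeed $Z(M_n(\mathbb{F}))=\mathbb{F}\cdot 1$ and one passes to the limit), $Z(A)=\mathbb{F}\cdot 1$, giving the stated isomorphism $\text{Inder}(A)\cong A^{(-)}/\mathbb{F}\cdot 1$. Lie algebra isomorphisms preserve simplicity, so $\text{Inder}(A)$ is simple exactly when $A^{(-)}/\mathbb{F}\cdot 1$ is, and Theorem \ref{theorem1} says this happens precisely when $\text{char}\,\mathbb{F}=0$, or $\text{char}\,\mathbb{F}=p>0$ and $\nu_p(\mathbf{st}(A))\in\{0,\infty\}$.

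For part (1): I would show that the isomorphism $\varphi$ carries $[A,A]/([A,A]\cap\mathbb{F}\cdot 1)$ onto $[\text{Inder}(A),\text{Inder}(A)]$. Indeed $\varphi$ is a Lie homomorphism, so $\varphi([A^{(-)},A^{(-)}]) = [\text{Inder}(A),\text{Inder}(A)]$; and $[A^{(-)},A^{(-)}]=[A,A]$ by definition, while the kernel of the restriction of $\varphi$ to $[A,A]$ is $[A,A]\cap\mathbb{F}\cdot 1$. Hence $[\text{Inder}(A),\text{Inder}(A)] \cong [A,A]/([A,A]\cap\mathbb{F}\cdot 1)$, which is simple by Herstein's Theorem \cite{SkolemNoether_1} since $A$ is simple (this is exactly the Lie algebra in \eqref{equation1}). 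Transporting simplicity back along the isomorphism gives the claim.

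There is no real obstacle here; the only point to state carefully is that the identification $\text{Inder}(A)\cong A^{(-)}/\mathbb{F}\cdot 1$ is one of Lie algebras, so that both the full algebra and its derived subalgebra are matched up correctly, and that simplicity is an isomorphism invariant. Everything else is a direct appeal to Theorem \ref{theorem1} and to Herstein's theorem already invoked in the introduction.
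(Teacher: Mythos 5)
Your proposal is correct and follows essentially the same route as the paper: part (1) via the identification $[\text{Inder}(A),\text{Inder}(A)]\cong [A,A]\,/\,([A,A]\cap\mathbb{F}\cdot 1)$ and Herstein's theorem, and part (2) by transporting Theorem \ref{theorem1} across the isomorphism $\text{Inder}(A)\cong A^{(-)}/\,\mathbb{F}\cdot 1$. Your extra care in checking that $\ker\varphi=Z(A)=\mathbb{F}\cdot 1$ (via the matrix subalgebras containing $1$) and that $\varphi$ matches derived subalgebras is exactly what the paper leaves implicit.
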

\begin{proof} The Lie algebra $$[\text{\text{Inder}}(A),\text{\text{Inder}}(A)] \ \cong \ [A,A] \, \diagup \, \raisebox{-2pt}{$[A,A]\cap \mathbb{F}\cdot 1$}$$ is simple by I.~N.~Herstein's Theorem \cite{SkolemNoether_1}. The part $(2)$ immediately follows from Theorem \ref{theorem1}. \end{proof}

\section{Proof of the Theorem \ref{theorem3}}

\begin{lemma}\label{lemma1} Let $A$ be an infinite--dimensional locally matrix algebra. Let $d\in \emph{\text{Der}}(A)$  and suppose that $d([A,A])$ lies in the center of the algebra $A.$ Then $d=0.$ \end{lemma}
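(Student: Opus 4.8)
The plan is to exploit the local structure: every element of $[A,A]$ sits inside $[A_1,A_1]$ for some finite matrix subalgebra $1\in A_1\cong M_n(\mathbb{F})$ with $n$ as large as we like, and a derivation of $A$ restricted to such a subalgebra is close to being inner on it. First I would fix an arbitrary $x\in A$ and choose a matrix subalgebra $1\in A_1\subseteq A$, $A_1\cong M_n(\mathbb{F})$, containing $x$ and also containing $d(x)$ — actually containing enough of the action of $d$ to control things; more precisely, pick a finite-dimensional matrix subalgebra $A_1$ large enough that $x\in A_1$ and then enlarge to a matrix subalgebra $A_2\supseteq A_1$ with $d(A_1)\subseteq A_2$, which is possible because $d(A_1)$ is a finite-dimensional subspace of the locally matrix algebra $A$.

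The key step is that a derivation of a finite-dimensional central simple algebra is inner (Skolem--Noether), but here $d$ need not map $A_1$ into $A_1$. So instead I would argue as follows. Since $d(A_1)\subseteq A_2$ with $A_2\cong M_m(\mathbb{F})$ a matrix algebra, consider the composition of $d|_{A_1}$ with the inclusion $A_1\hookrightarrow A_2$; this is a derivation from $A_1$ into the $A_1$-bimodule $A_2$. Writing $A_2=A_1\otimes_{\mathbb{F}}C$ with $C$ the centralizer of $A_1$ in $A_2$ (using the decomposition already invoked in the proof of Theorem~\ref{theorem1}), every derivation $A_1\to A_1\otimes C$ is of the form $a\mapsto [t,a]$ for some $t\in A_2$ (since $A_1\otimes C$ is a free $A_1$-bimodule, so the first Hochschild cohomology vanishes and all derivations into it are inner). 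Hence there is $t\in A_2$ with $d(a)=[t,a]$ for all $a\in A_1$. Now the hypothesis gives $d([a,b])\in Z(A)=\mathbb{F}\cdot 1$ for all $a,b\in A_1$, i.e. $[t,[a,b]]\in\mathbb{F}\cdot 1$ for all $a,b\in A_1$. But $[A_1,A_1]=\mathfrak{sl}(n)$ spans a large subspace — in particular $[A_1,A_1]$ generates $A_1$ as an algebra once $n\geq 2$ — and an element $t\in A_2$ with $[t,[A_1,A_1]]\subseteq\mathbb{F}\cdot 1$ must, upon projecting appropriately, centralize $[A_1,A_1]$ and hence all of $A_1$, so $[t,A_1]\subseteq\mathbb{F}\cdot 1$; combined with $[t,[A_1,A_1]]\subseteq\mathbb F\cdot 1$ and the fact that brackets of traceless matrices again have trace zero, one deduces $[t,a]=0$ for every $a\in A_1$, in particular $d(x)=[t,x]=0$.

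The main obstacle I expect is the bookkeeping in the step ``$[t,[A_1,A_1]]\subseteq\mathbb{F}\cdot 1$ forces $[t,A_1]=0$'': one must be careful that $t$ lives in the larger algebra $A_2=A_1\otimes C$, not in $A_1$, so $t$ decomposes as a sum $\sum a_i\otimes c_i$ and the condition $[t,[a,b]]=[\,\sum a_i\otimes c_i,\ [a,b]\otimes 1\,]=\sum[a_i,[a,b]]\otimes c_i\in\mathbb{F}\cdot 1$ must be unpacked using linear independence of suitable $c_i$ to conclude each $a_i$ centralizes $[A_1,A_1]$ modulo scalars, hence is scalar, hence $t\in\mathbb{F}\otimes C = C$, so $[t,A_1]=0$ outright. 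Handling the degenerate small cases ($n=1$, or characteristic dividing $n$ where $\mathfrak{sl}(n)\cap\mathbb{F}\cdot 1\neq 0$) requires choosing $A_1$ with $n$ large and coprime considerations, but since $A$ is infinite-dimensional we are free to take $n$ as large as needed and the argument goes through. Finally, since $x\in A$ was arbitrary, $d=0$.
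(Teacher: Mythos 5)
Your strategy is a genuinely different (and much heavier) route than the paper's: you realize $d|_{A_1}$ as $\mathrm{ad}(t)$ for some $t$ in a larger matrix subalgebra $A_2\supseteq A_1\cup d(A_1)$ via vanishing of Hochschild cohomology for the separable algebra $M_n(\mathbb{F})$, and then unpack $t$ in $A_2=A_1\otimes_{\mathbb{F}}C$. That part is fine (at least for unital $A$; note the lemma also allows non-unital $A$, where $Z(A)=\{0\}$ and the units of $A_1$ and $A_2$ need not coincide, so your tensor/bimodule setup needs a separate word). The genuine gap is in the last step. After the unpacking, everything reduces to the claim: if $b\in M_n(\mathbb{F})$ satisfies $[b,[M_n,M_n]]\subseteq\mathbb{F}\cdot 1$, then $b$ is scalar. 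Your two proposed justifications do not cover the case the paper actually needs. The trace argument only gives $\lambda\cdot n=0$, which says nothing when $\mathrm{char}\,\mathbb{F}=p$ divides $n$; and the fallback ``choose $n$ large and coprime to $p$'' is simply not available in general: if $\mathrm{char}\,\mathbb{F}=p$ and $\mathbf{st}(A)=p^{\infty}$, every unital matrix subalgebra of $A$ is isomorphic to some $M_{p^k}(\mathbb{F})$, so every admissible $n>1$ is divisible by $p$ — and this is precisely one of the interesting cases in Theorems \ref{theorem1} and \ref{theorem3}. Moreover the claim does fail in small size: in $M_2(\mathbb{F})$ with $p=2$ one has $[e_{12},\mathfrak{sl}(2)]\subseteq\mathbb{F}\cdot 1$ although $e_{12}$ is not scalar, so some honest argument with a lower bound on $n$ is unavoidable.

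The gap is repairable, but by a computation you did not supply rather than by coprimality: for $n\geq 3$, if $[b,e_{ij}]=\lambda\cdot 1$ for $i\neq j$ (and $e_{ij}\in[M_n,M_n]$), then the $(k,k)$ entry with $k\notin\{i,j\}$ forces $\lambda=0$, hence $b$ commutes with all off-diagonal matrix units and is therefore scalar, in any characteristic. Once this is inserted, your proof closes. It is worth comparing with the paper's own proof, which bypasses inner-ness and cohomology entirely: choosing $A_1\cong M_n(\mathbb{F})$ with $n\geq 4$, each off-diagonal matrix unit is a commutator in two ways, $e_{ij}=[e_{is},e_{sj}]=[e_{it},e_{tj}]$ with $i,j,s,t$ distinct, so the hypothesis and the Leibniz rule place $d(\varphi(e_{ij}))$ in two subspaces meeting only in $0$; since these matrix units generate $A_1$, the derivation vanishes on $A_1$ and hence on $A$. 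That argument is shorter, characteristic-free, and works verbatim in the non-unital case.
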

\begin{proof} Let  $Z$ be the center of $A.$ If $A$ is not unital then $Z=\{0\}.$ If $A$ is unital then $Z=\mathbb{F} \cdot 1.$ Consider a subalgebra $A_1 \subset A$ such that $A_1 \cong M_n(\mathbb{F})$ for some  $n\geq 4,$ and let $\varphi:M_n(\mathbb{F}) \rightarrow A_1$ be an isomorphism.   An arbitrary matrix unit  $e_{ij},$ $1 \leq i \neq j \leq n,$ lies in $[M_n(\mathbb{F}),M_n(\mathbb{F})].$ Choose distinct indices  $1 \leq i , j , s, t \leq n.$ Then $e_{ij}=[e_{is}, e_{sj}].$ Hence $$d\big(\varphi(e_{ij})\big) \ \in \ Z \varphi(e_{is}) \ + \ Z \varphi( e_{sj}).$$  On the other hand, $e_{ij}=[e_{it}, e_{tj}],$ which implies $$d\big(\varphi(e_{ij})\big) \ \in \ Z \varphi(e_{it}) \ + \ Z \varphi( e_{tj}).$$ Hence $d(\varphi(e_{ij}))=0.$ The algebra $M_n(\mathbb{F})$  is generated by matrix units $e_{ij},$ $1\leq i \neq j \leq n. $ Hence $d( \varphi(M_n(\mathbb{F}))  )=\{0\},$ and therefore $d(A)=\{0\}.$ This completes the proof of the Lemma.  \end{proof}

In \cite{Bez5}, we showed that for an arbitrary locally matrix algebra $A$ the ideal $\text{Inder}(A)$  is dense in the Lie algebra $\text{Der}(A)$ in the Tykhonoff topology.

\begin{proof}[Proof of Theorem $\ref{theorem3}$] $(1)$ Let $I$ be a nonzero closed ideal of the Lie algebra $[\text{Der}(A), \text{Der}(A)].$  Choose a nonzero element $d\in I.$ For an arbitrary element $a\in [A,A]$ we have $$ [d,\text{ad}(a)] \ = \ \text{ad}\big(d(a)\big) \ \in \ [\text{Inder}(A), \text{Inder}(A)].$$ By Lemma \ref{lemma1}, we can choose an element $a\in [A,A]$ so that $d(a)\neq 0.$ Hence $$ I \ \cap \ [\text{Inder}(A),\text{Inder}(A)] \ \neq \ \{0\}.$$ Since the Lie algebra  $[\text{Inder}(A),\text{Inder}(A)] $ is simple it follows that $$[\text{Inder}(A),\text{Inder}(A)] \subseteq I.$$ We have mentioned above that $\text{Inder}(A)$ is dense in the Lie algebra  $\text{Der}(A) $ in the Tykhonoff topology; see \cite{Bez5}. Hence, $[\text{Inder}(A),\text{Inder}(A)] $ is dense in the Lie algebra $[\text{Der}(A),\text{Der}(A)].$  Since the ideal $I$ is closed we conclude that $I=[\text{Inder}(A),\text{Inder}(A)] .$ This completes the proof of Theorem \ref{theorem3}(1).

\vspace{12pt}

$(2)$ Let $I$ be a nonzero closed ideal of the Lie algebra $\text{Der}(A).$  Choose a nonzero derivation  $d\in I.$ By Lemma \ref{lemma1}, there exists an element $a\in A$ such that $d(a)$ does not lie in $\mathbb{F} \cdot 1,$ hence $$ 0 \ \neq \ \text{ad}\big(d(a)\big) \ = \ [d,\text{ad}(a)] \ \in \ I \cap \ \text{Inder}(A).$$

Suppose that $\text{char} \ \mathbb{F} = 0$ or $\text{char} \ \mathbb{F} = p> 0$ and $\nu_p(\mathbf{st}(A))=0$ or $\infty.$  Then the Lie algebra  $\text{Inder}(A)$ is simple, and therefore $\text{Inder}(A)\subseteq I.$ Since $\text{Inder}(A)$ is dense in $\text{Der}(A)$ (see \cite{Bez5}) and the ideal $I$ is closed it follows that $I=\text{Der}(A).$

Now suppose that $\nu_p(\mathbf{st}(A))=k,$ $1\leq k < \infty.$ There exists a subalgebra $A_1$ in $A$ such that $$ 1\in A_1 \quad \text{and} \quad A_1 \cong M_{p^k}(\mathbb{F}).$$ Choose an element $a\in A_1$ such that $\text{tr}_{A_1}(a)\neq 0.$ We will show that the inner derivation $\text{ad}(a)$ does not lie in the closure $$ \overline{[\text{Der}(A),\text{Der}(A)]} \ = \ \overline{[\text{Inder}(A),\text{Inder}(A)]},$$ and therefore $\overline{[\text{Inder}(A),\text{Inder}(A)]}$ is a proper closed ideal in the Lie algebra $\text{Der}(A).$  If $\text{ad}(a)$ lies in the closure of $ [\text{Inder}(A),\text{Inder}(A)]$  then, by the definition of the Tykhonoff topology, there exist elements $a_i, b_i \in A,$ $1\leq i \leq n,$ such that $$ \Big( \, \text{ad}(a) \ - \ \sum_{i=1}^{n} \ \text{ad}\big( [a_i,b_i]\big) \,  \Big)(A_1) \ = \ \{0\}.$$ There exists a subalgebra $A_2\subset A$ such that $$A_1 \subseteq A_2, \quad  a, \, a_i, \, b_i \in  A_2,  \quad 1 \leq i \leq n, \quad \text{and} \quad A_2 \cong M_m(\mathbb{F}).$$ As above, we consider the centralizer $C$ of the subalgebra $A_1$ in $A_2$ such that $$A_2  =  A_1  \otimes_{\mathbb{F}} C, \quad C \cong M_t(\mathbb{F}), \quad \text{and} \quad t= m / p^k \quad \text{is not a multiple of} \quad p. $$

Consider the element $$b \ = \ \sum_{i=1}^{n} \ [a_i,b_i] \ \in \ A_2. $$ The difference $a-b$ commutes with all elements from $A_1,$ hence $a-b=c\in C.$

In the algebra $A_2$ we have $$ \text{tr}_{A_2}(c) \ = \ \text{tr}_{A_2}(1\otimes c) \ = \ \text{tr}_{A_1}(1) \cdot \text{tr}_{C}(c) \ = \ 0.$$ Hence $$ \text{tr}_{A_2}(a) \ = \ \text{tr}_{A_2}(b) \ + \ \text{tr}_{A_2}(c)  \ = \ 0.$$

On the other hand, $$ \text{tr}_{A_2}(a) \ = \ \text{tr}_{A_2}(a\otimes 1) \ = \ \text{tr}_{A_1}(a) \cdot t \ \neq \ 0.$$ This contradiction completes the proof of Theorem  \ref{theorem3}. \end{proof}


\begin{thebibliography}{99}

\bibitem{Bez5} O.~Bezushchak Derivations and automorphisms of locally matrix algebras {\em arXiv}:2007.15716v1 [math.RA] 30 Jul 2020.

\bibitem{BezOl} O.~Bezushchak and  B.~Oliynyk, Unital locally matrix algebras  and Steinitz numbers,  {\em J. Algebra Appl.} (2020) doi: 10.1142/SO219498820501807.

\bibitem{BezOl_2} O.~Bezushchak and B.~Oliynyk, Primary decompositions of unital locally matrix algebras,  {\em Bull. Math. Sci.} {\bf 10} (no.~1) (2020) doi: 10.1142/S166436072050006X.

\bibitem{Drozd_Kirichenko} Yu.~A.~Drozd, V.~V.~Kirichenko, Finite Dimensional Algebras, Springer-Verlag, Berlin--Heidelberg--New York (1994).

\bibitem{Glimm} J.~G.~Glimm,   On a certain class of operator algebras, {\em Trans. Amer. Math. Soc.} {\bf 95} (no.~2) (1960) 318--340.

\bibitem{SkolemNoether_1} I.~N.~Herstein, On the Lie and Jordan rings of a simple associative ring, {\em Amer. J. Math.}  {\bf 77} (no.~2)  (1955) 279--285.

\bibitem{Jacobson_2} Nathan Jacobson, Lie algebras, Dover Publ., Inc, New York (1979). Replication of the 1962 original.

\bibitem{Jacobson_3} Nathan Jacobson, Structure of rings, Colloquium Publications ({\bf  37} (1956).

\bibitem{Koethe}  G.~K\"{o}the, Schiefk\"{o}rper unendlichen Ranges uber dem Zentrum, {\em  Math. Ann.} {\bf 105} (1931) 15--39.

\bibitem{Kurosh} A.~Kurosh, Direct decompositions of simple rings, 	{\em Rec. Math. [Mat. Sbornik] N.S.}  {\bf 11} ({\bf 53}) (no.~3) (1942) 245--264.

\bibitem{Seligman} G.~B.~Seligman, Modular Lie Algebras, Springer-Verlag, Berlin--Heidelberg, 1967.

\bibitem{Willard}   S.~Willard, General Topology, Mineola, New York: Dover Publications, 2004.

\bibitem{ST} E.~Steinitz,  Algebraische Theorie der K{\"o}rper,  {\em J. Reine Angew. Math.}, \textbf{137} (1910) 167--309.


\end{thebibliography}
\end{document}